\newcommand{\tr}{\operatorname{tr}}
\newcommand{\supp}{\operatorname{supp}}
\renewcommand{\div}{\operatorname{div}}
\newcommand{\Rr}{{\mathbb{R}}}
\newcommand{\Tt}{{\mathbb{T}}}
\newcommand{\Ll}{{\mathcal{L}}}
\newcommand{\Ii}{{\mathcal{I}}}
\newcommand{\Pp}{{\mathcal{P}}}
\def\leq{\leqslant}
\def\geq{\geqslant}
\numberwithin{equation}{section}
\newtheoremstyle{thmlemcorr}{10pt}{10pt}{\itshape}{}{\bfseries}{.}{10pt}{{\thmname{#1}\thmnumber{
#2}\thmnote{ (#3)}}}
\newtheoremstyle{thmlemcorr*}{10pt}{10pt}{\itshape}{}{\bfseries}{.}\newline{{\thmname{#1}\thmnumber{
\newtheoremstyle{defi}{10pt}{10pt}{\itshape}{}{\bfseries}{.}{10pt}{{\thmname{#1}\thmnumber{
#2}\thmnote{ (#3)}}}
\newtheoremstyle{remexample}{10pt}{10pt}{}{}{\bfseries}{.}{10pt}{{\thmname{#1}\thmnumber{
#2}\thmnote{ (#3)}}}
\newtheoremstyle{ass}{10pt}{10pt}{}{}{\bfseries}{.}{10pt}{{\thmname{#1}\thmnumber{
A#2}\thmnote{ (#3)}}}
\theoremstyle{thmlemcorr}
\newtheorem{theorem}{Theorem}
\numberwithin{theorem}{section}
\newtheorem{lemma}[theorem]{Lemma}
\newtheorem{proposition}[theorem]{Proposition}
\theoremstyle{thmlemcorr*}
\newtheorem{theorem*}{Theorem}
\newtheorem{lemma*}[theorem]{Lemma}
\newtheorem{corollary*}[theorem]{Corollary}
\newtheorem{proposition*}[theorem]{Proposition}
\newtheorem{problem*}[theorem]{Problem}
\newtheorem{conjecture*}[theorem]{Conjecture}
\theoremstyle{defi}
\newtheorem{definition}[theorem]{Definition}
\theoremstyle{remexample}
\newtheorem{remark}[theorem]{Remark}
\newtheorem{cor}[theorem]{Corollary}
\theoremstyle{ass}
\begin{document}

\title[Displacement convexity for first-order mean-field games]{Displacement convexity for first-order mean-field games}

\author{Diogo A. Gomes}
\address[D. A. Gomes]{
        King Abdullah University of Science and Technology (KAUST), CEMSE Division , Thuwal 23955-6900. Saudi Arabia.}
\email{diogo.gomes@kaust.edu.sa}

\author{Tommaso Seneci}
\address[T. Seneci]{
        King Abdullah University of Science and Technology (KAUST), CEMSE Division , Thuwal 23955-6900. Saudi Arabia.}
\email{tommaso.seneci@kaust.edu.sa}

\keywords{Mean Field Game; Congestion; Optimal Transport; Displacement Convexity}
\subjclass[2010]{91A13, 35Q91, 26B25 }

\thanks{
        D. Gomes  and T. Seneci were partially supported by KAUST
baseline funds and KAUST OSR-CRG2017-3452.
}
\date{\today}

\begin{abstract}
Here, we consider the planning problem for first-order mean-field games (MFG). When there is no coupling between players, MFG degenerate into optimal transport problems. 
Displacement convexity is a fundamental tool in optimal transport that often reveals hidden convexity
of functionals and, thus, has numerous applications in the calculus of variations. 
We explore the similarities between the Benamou-Brenier formulation of optimal transport and MFG to extend displacement convexity methods from to MFG. In particular, 
we identify a class of functions, that depend on solutions of MFG, that are convex in time and, thus, 
obtain new a priori bounds for solutions of MFG.
A remarkable consequence is the log-convexity of $L^q$ norms. This convexity gives bounds for the density of solutions of the planning problem and extends displacement convexity of $L^q$ norms from optimal transport. Additionally, we prove the convexity of $L^q$ norms for MFG with congestion.
\end{abstract}

\maketitle

\section{Introduction}

Displacement convexity is an alternative concept of convexity used often in minimization problems in spaces of measures. Displacement convexity was introduced in  \cite{mccann1997convexity}  to study a non-convex variation problem where it revealed a hidden convexity that gives existence and uniqueness of minimizers.

Given two probability measures $\mu,\nu\in \Pp(\Rr^d)$, we say that a map $T:\Rr^d\to \Rr^d$
transports $\mu$ into $\nu$ if $\nu =T_\# \mu$, where
\begin{equation*}
\int_{\Rr^d} f(x)dT_\# \mu(x) = \int_{\Rr^d} f(T(x))d\nu(x) 
\end{equation*}
for all bounded continuous $f:\Rr^d\to \Rr$. 
In optimal transport, we are given two probability measures $\mu,\nu\in \Pp(\Rr^d)$, and we seek to transport $\mu$ into $\nu$ in the most efficient way according to  a given transport cost, 
see for example the surveys \cite{Santambrogiobook}, \cite{Villanithebook}, and\cite{Villani2008}. While this problem is discrete in nature, a remarkable alternative formulation due to Benamou and Brenier \cite{BB}, looks instead at paths in $\Pp(\Rr^d)$ that connect $\mu$ to $\nu$. The Benamou-Brenier formulation of optimal transport consists of minimizing the energy functional
\begin{equation*}
\int_{\Rr^d}\int_0^1 \rho^t(x) |v(x,t)|^2dxdt,
\end{equation*}
over all smooth velocity fields $v(x,t)$, with trajectories $T_x(t)$, and densities $\rho^t = T_{(\cdot)}(t)_\# \mu$, such that $\rho^0=\mu$ and $\rho^1 = \nu$. Under suitable regularity conditions, the optimality conditions of this variational problem are
\begin{equation} \label{BBintro}
\begin{cases}
-u_t + \frac{|Du|^2}{2} = \bar{H} \\
(\rho^t)_t - \div(\rho^t Du) = 0 \\
\rho^t \in \Pp_{ac}(\Rr^d) \quad \forall\ t\in [0,1] \\
\rho^0=\mu, \ \rho^1 = \nu,
\end{cases}
\end{equation}
where $v(x,t) = - D_x u(x,t) $ and $\bar{H}\in \Rr$. The \emph{displacement interpolant} between $\mu$ and $\nu$ is the minimizer of the Benamou-Brenier problem. A functional, $\mathcal{F}:\Pp(\Rr^d)\to \Rr$, is {\em displacement convex} if $t\mapsto \mathcal{F}(\rho^t)$ is convex for all {displacement interpolants} $\rho^t$. By using \eqref{BBintro}, we can differentiate twice $\mathcal{F}(\rho^t)$ to study displacement convexity. This methodology was used in \cite{Schachter2018}, where the author identifies a new class of displacement convex functionals that depend on spatial derivatives of the density.

Mean-field games (MFG) model the interaction between identical rational agents, see the original papers in \cite{Caines2, Caines1} and  \cite{ll1, ll2,ll3}, or the surveys \cite{cardaliaguet, GS,bensoussan,  gomes2016regularity}. In these games, each agent minimizes a value function, which is the same for every agent. In classical MFG, agents choose their trajectories given an initial configuration and a terminal cost. In the MFG planning problem \cite{LCDF, CDY, porretta}, the initial and terminal distribution of the agents are prescribed while the terminal cost is unknown. Here, we focus on the planning problem for first-order MFG. These games are given by a Hamilton-Jacobi equation coupled with a continuity equation
\begin{equation} \label{MFGintro}
\begin{cases}
-u_t + H(Du) = g(m) \\
m_t - \div(m D_p H(Du)) = 0 \quad& \forall\ (x,t)\in \Tt^d\times (0,T) \\
m(\cdot,0)=m^0(\cdot),\ m(\cdot,T)=m^T(\cdot).
\end{cases}
\end{equation}
Here, we use periodic boundary conditions, thus the spatial domain for \eqref{MFGintro} is $\Tt^d$, the $d$-dimensional torus. A classical solution of \eqref{MFGintro} is a pair $(u(x,t),m(x,t))\in C^\infty(\Tt^d \times [0,T])$, such that $u(x,t)$ and $m(x,t)\geq 0$ are periodic in $x$ for all time $t\in [0,T]$. The function $m$ represents the statistical distribution of the agents in space, whereas $u$ represents their value function. 
The Hamiltonian, 
$H(Du)$, 
takes into accounts the movement cost of the agents and their preferred direction of motion, and $g(m)$ determines the interactions between agents.
As can be seen by comparing \eqref{BBintro} with \eqref{MFGintro}, the optimal transport problem is a special case of a first-order MFGs where the interaction between the agents does not exist; that is, $g=0$. 
In the {\em initial-terminal value} problem, \eqref{MFGintro} is endowed with initial, $m(\cdot,0)=m^0(\cdot)$, and terminal, $u(\cdot,T)=u^T(\cdot)$, conditions; that is, agents are given a terminal cost, and their initial distribution is specified. In contrast, in the {\em planning problem}, $m^0$ and $m^T$, the initial and terminal distributions, are specified. Thus, our goal is to find a cost, $u$, that steers agents from an initial distribution, $m^0$, to a desired terminal distribution, $m^T$.

The initial-terminal value problem for second-order MFGs is now well understood. 
The existence and uniqueness of smooth solutions of the time dependent problem for 
were first studied in \cite{ll2, ll3}, and examined in detail in \cite{LCDF}. Subsequently, several authors considered
classical \cite{GPM2, GPM3, Gomes2016c, Gomes2015b} and weak solutions \cite{CarLasLionsPorr12, porretta2}. 
For first-order MFGs, several Sobolev regularity results were obtained in  \cite{GraCard}, \cite{cgbt}, \cite{San16}, and \cite{graber2017sobolev}. 
The planning problem was addressed from a variational numerical perspective in 
\cite{CDY} and for
second-order MFGs in \cite{porretta, porretta2}. 

Here, we explore displacement convexity  
properties to construct a new class of estimates for first-order MFGs. 
In particular, 
the primary goal of this paper is to identify functions $U:\Rr^+_0 \to \Rr$ such that
\begin{equation} \label{functionalConvex}
t \mapsto \int U(m(x,t))dx \quad \text{is convex},
\end{equation}
where $m(x,t)$ solves \eqref{MFGintro}. In the case of optimal transport, 
\eqref{functionalConvex} is displacement convex if
\begin{equation} \label{DispConvMFGintro}
\begin{cases}
P(z) = U'(z) z - U(z), \\
P\in C^1(\Rr^+_0),\ P(z)\geq 0, \\
\frac{P(z)}{z^{1-\frac{1}{d}}} \quad \text{non-decreasing}.
\end{cases}
\end{equation}
The convexity of the preceding functional gives the following 
a priori bound:
\[
\int U(m(x,t))dx\leq \frac{t}{T }\int U(m(x,T))dx
+\left(1-\frac{t}{T }\right)\int U(m(x,0))dx, 
\]
which are particularly interesting in the case of the planning problem because $m(x,0)$ and $m(x,T)$ are known. 

In Section \ref{SMFG}, we prove the following result on the convexity of functionals that depend on the density of solutions of first-order MFGs, as in \eqref{functionalConvex}.
\begin{theorem}
\label{mainthm}
Let $m,u\in C^\infty(\Tt^d\times [0,T])$, $m\geq 0$, be periodic solutions of the first-order MFG
\begin{equation} \label{MFGMain}
\begin{cases}
-u_t+H(Du) = g(m) \\
m_t-\div(m D_pH(Du)) = 0
\end{cases}
\end{equation}
with $g:\Rr^+_0\to \Rr$, $H:\Rr^d\to \Rr$ smooth, $g$ non-decreasing, and $H$ convex. If $U:\Rr^+_0\to \Rr$ is such that \eqref{DispConvMFGintro} holds, then
\begin{equation*}
t \mapsto \int_{\Tt^d} U(m(x,t))dx \quad \text{is convex.}
\end{equation*}
\end{theorem}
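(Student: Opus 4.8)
The plan is to compute $\frac{d^2}{dt^2}\int_{\Tt^d} U(m(x,t))\,dx$ directly along a smooth solution $(u,m)$ of \eqref{MFGMain} and show it is nonnegative. Write $v=-D_pH(Du)$ so that the continuity equation reads $m_t+\div(mv)=0$; equivalently $m_t = \div(mD_pH(Du))$. First I would differentiate once: using the continuity equation and integration by parts on $\Tt^d$ (no boundary terms),
\begin{equation*}
\frac{d}{dt}\int_{\Tt^d} U(m)\,dx = \int_{\Tt^d} U'(m)\,m_t\,dx = -\int_{\Tt^d} P(m)\,\div v \, dx,
\end{equation*}
where $P(z)=U'(z)z-U(z)$ is the pressure from \eqref{DispConvMFGintro}; here one uses $\int U'(m)\div(mv) = -\int U''(m)Dm\cdot v\, m = -\int DP(m)\cdot v = \int P(m)\div v$ after another integration by parts. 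Differentiating a second time produces a term with $\frac{d}{dt}\big(P(m)\big)$ and a term with $\frac{d}{dt}(\div v)$. The first is handled again by the continuity equation. The second requires differentiating the Hamilton-Jacobi equation in \eqref{MFGMain}: applying $D_x$ and then taking a suitable combination, $\frac{d}{dt}$ of $\div v = -\div(D_pH(Du))$ along trajectories brings in $D^2u$ contracted with $D^2_{pp}H$, plus the coupling term $D^2u$ paired with $g'(m)$, because differentiating $-u_t+H(Du)=g(m)$ gives $-Du_t+D^2_{pp}H(Du)D(Du)=g'(m)Dm$.

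The key algebraic step is then to collect all terms into a sum of manifestly nonnegative pieces. I expect the second derivative to organize as
\begin{equation*}
\frac{d^2}{dt^2}\int_{\Tt^d} U(m)\,dx = \int_{\Tt^d} \Big[ \big(P'(m)m - P(m)\big)(\div v)^2 + \text{(Hessian term)} + \text{(congestion term)} \Big] dx,
\end{equation*}
up to integrations by parts, where: the Hessian term is of the form $\int P(m)\,\tr\big(D^2_{pp}H(Du)\,(D^2u)^2\big)$-type and is nonnegative because $P\geq 0$ and $H$ convex (so $D^2_{pp}H\succeq 0$ and the trace of a product of two positive semidefinite matrices is nonnegative); the congestion term is of the form $\int P(m)g'(m)\,m\,(\text{something}\geq 0)$ or $\int P(m)g'(m)|Dm|^2$-type, nonnegative because $g$ is non-decreasing and $P\geq 0$; and the first term is controlled by the McCann condition. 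Indeed the monotonicity of $z\mapsto P(z)/z^{1-1/d}$ is exactly equivalent to $P'(z)z - (1-\tfrac1d)P(z)\geq 0$, and combined with the trace-of-Hessian term via the pointwise inequality $(\div v)^2 = (\tr A)^2 \leq d\,\tr(A^2)$ for the matrix $A = D_pH$-derivative of the velocity, one obtains $\int\big[P'(m)m\,(\div v)^2 - \tfrac1d P(m)(\div v)^2 + P(m)\tr(A^2)\big]\geq \int\big[P'(m)m - (1-\tfrac1d)P(m)\big](\div v)^2\cdot(\text{nonneg factor})\geq 0$. The precise matching of $\tr(A^2)$ versus the genuine Hessian term $\tr(D^2_{pp}H(D^2u)^2)$ uses convexity of $H$ once more, possibly together with $D^2_{pp}H \succeq cI$ on the relevant range or a Cauchy-Schwarz step.

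The main obstacle I anticipate is the bookkeeping in the second differentiation: correctly tracking which integrations by parts are legitimate on $\Tt^d$ (they all are, by periodicity and smoothness), and above all verifying that the cross terms coming from $\frac{d}{dt}(\div v)$ — namely the terms mixing $Dm$, $D^2u$ and $D_pH$ — either cancel or reassemble into the nonnegative trace term rather than leaving an indefinite remainder. This is where the optimal-transport computation of McCann/Schachter must be adapted: in the pure transport case ($g=0$, $H(p)=|p|^2/2$, so $D^2_{pp}H=I$) the identity is classical, and the new contributions here are exactly (i) the general convex $H$, which only helps via $D^2_{pp}H\succeq0$, and (ii) the coupling $g(m)$, which contributes the extra nonnegative congestion term and otherwise does not spoil the structure precisely because $g'\geq 0$. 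Once the second derivative is written as the advertised sum of nonnegative integrals, convexity follows immediately.
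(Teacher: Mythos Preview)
Your proposal is correct and follows essentially the same route as the paper: compute $\frac{d}{dt}\int U(m)=\int P(m)\,\div(D_pH)$, differentiate again, integrate by parts to isolate $(P'(m)m-P(m))(\div D_pH)^2$, a trace term, and a coupling term, then invoke the trace inequality and the McCann condition. Two small corrections to the forms you guessed: the Hessian term that emerges is $\int P(m)\,\tr\big((D^2_{pp}H\,D^2u)^2\big)$, not $\tr\big(D^2_{pp}H\,(D^2u)^2\big)$, and its nonnegativity plus the needed inequality $\tr((AB)^2)\geq \tfrac1d(\tr AB)^2$ for $A=D^2_{pp}H\succeq0$ symmetric and $B=D^2u$ symmetric follows because $AB$ is similar to the symmetric matrix $A^{1/2}BA^{1/2}$ and hence diagonalizable with real eigenvalues (no uniform convexity of $H$ is needed); and the coupling term is $\int P'(m)\,g'(m)\,Dm\cdot D^2_{pp}H\,Dm$, with $P'$ rather than $P$, which is nonnegative since $P'\geq0$ follows from $zP'(z)\geq(1-\tfrac1d)P(z)\geq0$.
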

Functionals of the form $t \mapsto \int_{\Tt^d} m(x,t)^q dx$ satisfy the conditions of the preceding theorem. Moreover, a careful computation reveals the convexity of $t\to \ln(\|m(\cdot,t)\|_{L^q(\Tt^d)})$ for all $1\leq q \leq \infty$, see Proposition \ref{functionalConvex}. Furthermore, this log-convexity generalizes the  result in  \cite{mccann1997convexity} concerning the  displacement convexity of $\rho \mapsto \int \rho(x)^q dx$.
Here, we should also mention the recent work \cite{LavSantambrogio2017} where, using a discretization method and ideas that are reminiscent from displacement convexity, the authors prove additional regularity for mean-field games.

MFGs with congestion model 
the case where the agents' displacement cost increases in high-density regions. These games
correspond to the system
\begin{equation*}
\begin{cases}
-u_t + m^\alpha H\left( \frac{Du}{m^\alpha} \right) = g(m) \\
m_t - \div\left(m D_p H\left( \frac{Du}{m^\alpha} \right) \right) = 0 \quad & \forall\ (x,t)\in \Tt^d\times (0,T) \\
m(\cdot,t)\in \Pp_{ac}(\Rr^d) & \forall\ t\in(0,T) \\
m(\cdot,0)=m^0(\cdot),\ m(\cdot,T)=m^T(\cdot)
\end{cases}
\end{equation*}
for $\alpha>0$. 
The existence and uniqueness of solutions of second-order classical MFG with congestion were studied  in \cite{EvGom,Gomes2015b} in the stationary case and 
 in \cite{Achdou2016,Graber2} in the non-stationary case. First-order MFG with congestion were studied in the stationary case in \cite{EFGNV2017} ,  \cite{nurbekyan17}, and  \cite{GNPr216}  and in the
 time-dependent case, for the forward-forward model, in \cite{GomesSedjroFFMFGCongestion}. 
 In particular, as far as the authors are aware the planning problem was not studied previously. 
Here, 
in Section \ref{SExtension}, we examine  the case where $H(p)=\frac{|p|^\beta}{\beta}$ and, in Theorem \ref{THMcongestion},  prove the convexity of $t\mapsto \int_{\Tt^d} m(x,t)^pdx$, $p$ depending on $\alpha$ and $\beta$. As an application, we obtain $L^\infty$ bounds for the density
in Corollary \ref{remarkCongestion}.

\section{Preliminaries}
\label{SdispConv}

Here, we briefly review the optimal transport problem and the Benamou-Brenier formulation. Subsequently, we recall displacement convexity and discuss elementary examples.

\subsection{Optimal Transport}

Let $\Pp(\Rr^d)$ be the set of probability measures in $\Rr^d$, and $\Pp_{ac}(\Rr^d)$ the subset of those probabilities that are absolutely continuous with respect to the Lebesgue measure.

The optimal transport problem, also known as the Monge-Kantorovich problem,
studies the optimal way of moving mass between two different locations. 
We are given an initial distribution of mass determined by a probability measure, $\mu\in \Pp(\Rr^d)$, and a target distribution given by another probability measure, $\nu \in \Pp(\Rr^d)$. To each unit of mass moved from $x\in \Rr^d$ to $y\in \Rr^d$, 
we associate a cost, $c(x,y)$. The Monge-Kantorovich problem consists of minimizing the total cost,
\begin{equation} \label{MKproblem}
\int_{\Rr^d\times \Rr^d} c(x,y)d\pi(x,y),
\end{equation}
over the set of plans $\Pi[\mu,\nu] = \{\pi \in \Pp(\Rr^d\times \Rr^d) \text{ with marginals } \mu,\nu \}$. If $c(x,y)$ is positive and lower semi-continuous, there exist a minimizer of \eqref{MKproblem}, see \cite{Villanithebook}. For a quadratic cost, $c(x,y) = |x-y|^2$, a duality formulation due to Kantorovich uncovers remarkable properties of the optimal plan.  If $\mu,\nu\in \Pp_{ac}(\Rr^d)$ and have finite second-order moments, the minimizing plan is unique and can be written in the form
\begin{equation*}
\pi = (Id \times D\phi)_\# \mu,
\end{equation*}
where $D\phi$ is the unique gradient of a convex function such that $\nu = D\phi_\# \mu$; that is, for every $E\subset \Rr^d$ measurable,
\begin{equation*}
\nu(E) = (D\phi_\# \mu)(E) = \mu((D\phi)^{-1}(E)).
\end{equation*}
Thus, the minimum of \eqref{MKproblem} equals to
\begin{equation} \label{MKminimized}
\int_{\Rr^d\times \Rr^d} |x-D\phi(x)|^2d\mu(x)dy.
\end{equation}
In the literature, $D\phi$ is called the Brenier's map transporting $\mu$ into $\nu$, see \cite{MR1100809}.

\subsection{The Benamou-Brenier Formulation}

In the time-dependent optimal transport problem, each particle moves from $\mu$ to $\nu$ according to a piecewise $C^1$ trajectory $T_x(t):[0,1]\to \Rr^d$. At $t=0$, $T_x(0)=x$, and at time $t=1$ particles reach their destination in $\supp(\nu)$. Accordingly, we require $\nu = T_{(\cdot)}(1)_\# \mu$. The time-dependent optimal transport problem consists of minimizing a displacement cost  $C = C(T_x(\cdot))$ over all trajectories $\{T_x(\cdot)\}_{x\in\Rr^d}$ transporting $\mu$ into $\nu$, i.e.
\begin{equation*}
\inf \left\{ \int_X C(\{T_x(t)\}_{t\in (0,1)})d\mu(x): T_x(0) = x,\ {T_{(\cdot)}(1)}_\# \mu = \nu \right\}.
\end{equation*}
An important case is given by differential cost function
\begin{equation*}
C(\{T_x(t)\}_{t\in (0,1)}) = \int_0^1 c(\dot{T}_x(t))dt,
\end{equation*}
where $c$ is a convex function. Thanks to Jensen's inequality, we find
\begin{equation} \label{compare1}
\int_0^1 c(\dot{T}_x(t))dx \geq c\left( \int_0^1 \dot{T}_x(t) dx \right) = c(y-x).
\end{equation}
For $c(x)=|x|^2$, by comparing \eqref{compare1} with \eqref{MKminimized}, we see that straight lines are admissible trajectories. Hence, they are minimizers. Thus, the optimal velocities are $x-D\phi(x)$, $D\phi(x)$ being the Brenier's map transporting $\mu$ into $\nu$. This means that the minimizing straight lines are
\begin{equation} \label{displInterpolantsTrajectory}
T_x(t) = (1-t)x + tD\phi(x).
\end{equation}
At each time $t\in [0,1]$, $\mu$ is transported into
\begin{equation*}
\rho^t = ((1-t)x+tD\phi(x))_\# \mu.
\end{equation*}


The previous discussion suggests we move our perspective to the Eulerian point of view. For that, we fix $x\in\Rr^d$ and  consider a smooth trajectory $T_x(t)$ determined by a Lipschitz velocity field $v(x,t)$; that is,
\begin{equation*} 
\begin{cases}
\dot{T}_x(t) = v(T_x(t),(t)) \\
T_x(0) = x.
\end{cases}
\end{equation*}

If $\{T_{(\cdot)}(t)\}_{0\leq t \leq T}$ is a Lipschitz family of diffeomorphisms, the pushforward $\rho^t = {T_{(\cdot)}(t)}_\# \mu$ is the unique solution of the \emph{continuity equation}
\begin{equation} \label{MassTransportPDE}
\frac{\partial \rho^t}{\partial t} + \div(\rho^t v) = 0
\end{equation}
in the weak sense. We look for a path $\rho^t$ that minimizes the total action
\begin{equation*}
A[\rho,v] = \int_0^1 E(t)dt = \int_0^1 \int_{\Rr^d} \rho^t(x) \frac{|v(x,t)|^2}{2}dxdt.
\end{equation*}
As in \cite{BB}, if $\mu,\nu\in P_{ac}(\Rr^d)$ are compactly supported and satisfy suitable conditions \cite{Villanithebook}, then
\begin{equation*}
\inf_{\pi\in \Pi[\mu,\nu]} \int_{\Rr^d\times\Rr^d} |x-y|^2 d\pi(x,y) = \inf_{\rho,v} A[\rho,v],
\end{equation*}
where the infimum on the r.h.s is taken over all smooth $(\rho,v)$ solving \eqref{MassTransportPDE} with $\rho^0=\mu$ and $\rho^1=\nu$. The optimality conditions of this problem correspond to \eqref{BBintro}, as we describe next. We already have a partial differential equation solved by the density $\rho^t$. Moreover, if $\{T_x(\cdot)\}_{x\in\Rr^d}$ are constant speed trajectories such that $x\mapsto T_x(t)$ are diffeomorphisms for all $t$, then $v(x,t)$ solves
\begin{equation*} 
\frac{\partial v}{\partial t} + v\cdot Dv = 0.
\end{equation*}
Because of \eqref{displInterpolantsTrajectory},  it turns out that $v$ is a gradient. Thus, $v=-D_x u$. Consequently, $u$ solves a Hamilton-Jacobi equation
\begin{equation} \label{HamJacBase}
-\frac{\partial u}{\partial t} + \frac{|Du|^2}{2} = \bar{H}, \ \bar{H}\in \Rr.
\end{equation}
If we combine \eqref{HamJacBase} with \eqref{MassTransportPDE}, we get the following system
\begin{equation} \label{almostMFG}
\left\{ \begin{array}{rl}
-\frac{\partial u}{\partial t} + \frac{|Du|^2}{2}& = \bar{H} \\
\frac{\partial \rho^t}{\partial t} - \div (\rho^t Du ) &= 0.
\end{array}\right.
\end{equation}
Because displacement interpolants are constant speed trajectories, \eqref{almostMFG} are the corresponding optimality conditions.

The system \eqref{almostMFG} has a triangular structure. The first equation does not depend on $m$, while  the second one depends on $Du$. First-order MFGs are recovered by adding a coupling $g=g(m)$ to the Hamilton-Jacobi equation. Due to this coupling, MFGs no longer have this triangular structure, and, thus, their study becomes substantially more challenging.

\subsection{Displacement Convexity}

Displacement convexity was introduced in \cite{mccann1997convexity} to study a non-convex variational problem from the theory of interacting gases. In that problem, the gas density  is determined by a probability, $\rho\in P_{ac}(\Rr^d)$. Each particle is subject to two  forces: one given by an interaction potential $W(x-y)$ that increases with the distance between particles, and the other given by the internal energy, $U(z)$. The potential is 
\begin{equation*}
\mathcal{W}(\rho) = \frac{1}{2} \int_{\Rr^d\times \Rr^d} W(x-y)d\rho(x)d\rho(y),
\end{equation*}
and the internal energy is
\begin{equation*}
\mathcal{U}(\rho) = \int_{\Rr^d} U(\rho(x))dx.
\end{equation*}
In that model, the configuration of the gas minimizes the energy
\begin{equation*} 
E(\rho) = \mathcal{U}(\rho) + \mathcal{W}(\rho).
\end{equation*}
Given the variational nature of the problem, the convexity of $E$ is of paramount importance. If $U$ is convex, then $\mathcal{U}$ is also convex. However, convexity of $W$ does not imply the
convexity of $\mathcal{W}$.

A fundamental contribution in \cite{mccann1997convexity} is a new way of interpolating two probabilities densities, $\mu,\nu\in \Pp_{ac}(\Rr^d)$, that reveals a hidden convexity in $\mathcal{U}$ and $\mathcal{W}$.
For a given family of trajectories $\{T_{(\cdot)}(t)\}_{t\in(0,1)},\ T_{(\cdot)}(t): \Rr^d\to \Rr^d$, we set $\rho^t = {T_{(\cdot)}(t)}_\# \rho$. Thus, 
\begin{align*}
\mathcal{W}(\rho^t) &= \iint W(x-y)d\rho^t(x)d\rho^t(y) \\
&= \iint W(x-y)d ({T_{(\cdot)}(t)}_\# \rho)(y) d ({T_{(\cdot)}(t)}_\# \rho)(x) = \iint W(T_{x}(t)-T_{y}(t))d\rho (y)d\rho(x).
\end{align*}
Therefore, if $T_x(t)$ is linear in time, the map $t\mapsto \mathcal{W}(\rho^t)$ is convex. 
\begin{definition}
Let $\mu,\nu\in \Pp_{ac}(\Rr^d)$ and $D\phi :\Rr^d\to \Rr^d$ the unique gradient of a convex function such that $\nu = D\phi_\# \mu$. The \emph{displacement interpolant} between $\mu$ and $\nu$ is
\begin{equation*}
\rho^t = ((1-t)(\cdot) + t D\phi(\cdot))_\# \mu.
\end{equation*}
\end{definition}
A
 function $\mathcal{F}:\Pp_{ac}(\Rr^d)\to \Rr$
 is \emph{displacement convex}  if it 
 is convex along displacement interpolants; that is,
\begin{equation*}
t\mapsto \mathcal{F}(\rho^t) \ \text{is convex for all $\rho^t$ displacement interpolants}.
\end{equation*}

As we have seen, the map $t\mapsto \mathcal{W}(\rho^t)$ is convex; that is, $\rho\mapsto \mathcal{W}(\rho)$ is displacement convex. In general, even if $U$ is convex, $\mathcal{U}$ may not be displacement convex. However, the following condition proven in \cite{mccann1997convexity} gives the required convexity: if
\begin{equation} \label{InternalEnergy}
z \mapsto z^d U(z^{-d}), z\in \Rr^+ \quad \text{ is convex, non-increasing and }\ U(0)=0,
\end{equation}
then 
\begin{equation*}
t\mapsto \mathcal{U}(\rho^t) = \int U(\rho^t(x))dx \quad \text{is convex}.
\end{equation*}
 In \cite{Villanithebook}, the author derives conditions equivalent to
 \eqref{InternalEnergy} for $U$ sufficiently smooth in terms
of the pressure
\begin{equation*}
P(z) = U'(z)z-U(z).
\end{equation*}
By differentiating twice $z\mapsto z^d U(z^{-d})$ and using the preceding identity into the resulting expression, we conclude that for $U\in C^1(\Rr^+_0)$ and if $P$ satisfies \eqref{DispConvMFGintro}. 
then $\rho \mapsto \int U(\rho)$ is displacement convex. Notice that $P'$ is non-negative, as we recover by differentiating $\frac{P(z)}{z^{1-\frac{1}{d}}}$,
\begin{equation*} 
z P'(z) \geq \left( 1-\frac{1}{d} \right)P(z) \geq 0.
\end{equation*}
Consequently, we can differentiate $P$ to show that the above condition implies the convexity of  $U$:
\begin{equation*}
P'(z) = U''(z)z+U'(z)-U'(z)=U''(z)z \geq 0.
\end{equation*}

Here, we use an alternative approach explored in \cite{Schachter2018} to study displacemnet convexity. 
Formally, because displacement interpolants are solutions of the Benamou-Brenier problem, \eqref{almostMFG}, to check displacement convexity, it is enough to prove that
\begin{equation} \label{convexFunPrelim}
\frac{d^2}{dt^2} \int U(\rho^t(x))dx \geq 0
\end{equation}
for all $(\rho^t,u)$ smooth solutions of \eqref{almostMFG}. 
Because first-order MFGs are recovered by coupling the Hamilton-Jacobi equation in \eqref{almostMFG}, differentiating \eqref{convexFunPrelim} for $\rho^t \equiv m$ solving \eqref{almostMFG} may lead to similar displacement convexity inequalities. In the next section, we prove that this holds provided that the coupling $g(m)$ is non-decreasing and $H(p)$ is convex.

\section{Displacement convexity in first-order mean-field games}
\label{SMFG}

Here, we prove that, if $U$ satisfies \eqref{DispConvMFGintro}, then $t\mapsto \int_{\Tt^d} U(m(x,t))dx$ is convex, where $(u, m)$ solves \eqref{MFGMain}.
We end this section by examining the one-dimensional case, where more precise results can be proven. 

\subsection{Convex Functionals for First-Order Mean-Field Games}

Here, we prove our main result, Theorem \ref{mainthm}, that extends displacement convexity to MFG.

\begin{proof}[Proof of Theorem \ref{mainthm}]We begin by the following computation
\begin{align*}
\frac{d}{d t} \int U(m) &= \int U'(m)m_t = \int U'(m)\div(mD_pH) \\
&= \int U'(m)m\div(D_pH) + U'(m) Dm D_pH \\
&= \int U'(m)m \div(D_pH) + D(U(m)) D_pH \\
&= \int U'(m)m\div(D_pH) - U(m) \div(D_pH) = \int P(m)\div(D_pH),
\end{align*}
where $P(m)$ is given by \eqref{DispConvMFGintro}. Differentiating again, we obtain
\begin{align*}
\frac{d^2 }{d t^2} \int U(m) &= \int P'(m)m_t \div(D_pH) + P(m)\div(D_p H_t ) \\
& = \int P'(m)\div(m D_pH) \div(D_p H) + P(m) \div(D_{pp}^2 H Du_t) \\
&= \int \overbrace{P'(m)m \div(D_pH)^2}^A + \overbrace{P'(m) Dm D_pH \div(D_pH)}^B \\
&  + \overbrace{P(m)\div(D_{pp}^2H D(H))}^C - \overbrace{P(m)\div(g'(m) D_{pp}^2H Dm)}^D.
\end{align*} 

We want to generalize Lemma 5.43 in \cite{Villanithebook}; thus, we expect to get an inequality of the form
\begin{align*} 
\frac{d^2}{d t^2} \int U(m) &\geq  \int \overbrace{mP'(m)\div(D_pH)^2}^{A'} -\overbrace{P(m)\div(D_pH)^2}^{B'} + \overbrace{\frac{P(m)}{d} \div(D_pH)^2}^{C'} dx \\
& \hspace{6cm} + \ \text{ non-negative terms}.
\end{align*}
The first three terms, $A',B'$ and $C'$, correspond to the optimal transport case; that is, $g=0$. Due to conditions \eqref{DispConvMFGintro}, $A'+B'+C'\geq 0$. We note that $A=A'$. Next, we integrate by parts $B$ to get
{\small \begin{align} \label{equationQ}
B &= \int P'(m) Dm D_pH \div(D_p H) = \int D(P(m)) D_p H \div(D_p H) \\
&= -\int P(m)\div(D_p H \div(D_p H)) = \int - \overbrace{P(m)\div(D_p H)^2}^{B'} - \overbrace{P(m)D_p H D(\div(D_pH))}^Q.  \notag
\end{align} }
To simplify $C$, we compute
\begin{align}\label{divergenceTrace}
\div(D_{pp}^2 H D(H)) &= \div(D_{pp}^2 H D^2u D_pH) = \div(D(D_pH)D_pH) \\\notag
&= ((H_{p_i})_{x_j} H_{p_j})_{x_i} = (H_{p_i})_{x_i,x_j} H_{p_j} + (H_{p_i})_{x_j} (H_{p_j})_{x_i} \\\notag
&= D(\div(D_p H))D_p H + \tr((D(D_p H))^2). 
\end{align}
Then, we expand $C$ as follows
\begin{equation*}
C = \int P(m)\div(D_{pp}^2H D(H)) = \int \overbrace{P(m)D(\div(D_p H))D_p H}^Q + P(m)\tr((D(D_p H))^2)
\end{equation*}
and notice that $Q$ cancels the corresponding term in \eqref{equationQ}. Finally, $D$ is
\begin{equation*}
D = \int (-P(m)\div(D_{pp}^2 H D(g(m)))) = \int P'(m)g'(m) Dm D_{pp}^2 H Dm. 
\end{equation*}
According to the preceding identities, we get
\begin{align*}
\frac{d^2}{dt^2} \int U(m) =& \int \overbrace{P'(m)m \div(D_pH)^2}^{A'} - \overbrace{P(m) \div(D_pH)^2}^{B'} \\
& + P(m) \tr((D(D_pH))^2) + P'(m) g'(m) Dm D_{pp}^2 H Dm.
\end{align*}
Because $D(D_pH) = D_{pp}H D^2 u$ is the product of a positive semidefinite matrix and a symmetric matrix, Lemma \eqref{lemmaMatrix} implies
\begin{equation*}
\tr((D(D_pH))^2) \geq \frac{1}{d} \tr(D(D_pH))^2 = \frac{1}{d} \div(D_pH)^2.
\end{equation*}
Since $P$ is non-negative, we obtain
\begin{equation*}
\int P(m)\tr((D(D_pH))^2) \geq \int \overbrace{\frac{1}{d}P(m)\div(D_pH)^2}^{C'}.
\end{equation*}
Finally, in view of the preceding identities, the last expression becomes
\begin{align} \label{estimateMain}
\frac{d^2}{dt^2} \int U(m) \geq \int & \left( P'(m)m - P(m) + \frac{1}{d}P(m) \right)\div(D_pH)^2 \\\notag
& + P'(m) g'(m) Dm D_{pp}^2 H Dm \geq 0,
\end{align}
which is convex because \eqref{DispConvMFGintro} holds, because $g'(m)\geq 0$, and because $H(p)$ is convex.
\end{proof}

\subsection{$L^q$ Estimates}

In the previous section, we identified conditions on $U$ such that $t \mapsto \int U(m(x,t))dx$ is convex  when $(u,m)$ solves a first-order MFG. The function $U(z) = z^q$ satisfies \eqref{DispConvMFGintro} for all $1 \leq q < \infty$. Here, we refine this result and prove the log-convexity of the $L^q$ norms of the density. 

\begin{proposition} \label{LogConvexity}
Let $u,m\in C^\infty(\Tt^d\times [0,T])$ be periodic solutions of \eqref{MFGMain} with $g,H$ smooth, $g$ non-decreasing, and $H$ convex. Then, for all $1\leq q \leq \infty$,
\begin{equation} \label{ineqNorms}
\|m(\cdot,t)\|_{L^q(\Tt^d)} \leq \|m^0(\cdot)\|_{L^q(\Tt^d)}^{1-\frac{t}{T}}\|m^T(\cdot)\|_{L^q(\Tt^d)}^{\frac{t}{T}}, \quad \forall\ t\in [0,T].
\end{equation}
\end{proposition}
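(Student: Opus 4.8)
The plan is to deduce everything from the second-derivative computation already carried out in the proof of Theorem \ref{mainthm}, specialized to $U(z)=z^q$, and to upgrade ``convex'' to ``log-convex'' by a Cauchy--Schwarz inequality. First I would dispose of the two endpoint exponents. For $q=1$ the function $U(z)=z$ has pressure $P\equiv 0$, so the first computation in the proof of Theorem \ref{mainthm} gives $\frac{d}{dt}\int_{\Tt^d} m=\int_{\Tt^d}P(m)\div(D_pH)=0$; that is, mass is conserved and $t\mapsto\|m(\cdot,t)\|_{L^1(\Tt^d)}$ is constant, so \eqref{ineqNorms} holds with equality. For $q=\infty$ I would obtain the estimate by passing to the limit $q\to\infty$ in \eqref{ineqNorms} for finite exponents, using that $\|f\|_{L^q(\Tt^d)}\to\|f\|_{L^\infty(\Tt^d)}$ as $q\to\infty$ because $\Tt^d$ has unit measure. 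Thus the substance is the range $1<q<\infty$.

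Fix $1<q<\infty$ and set $\phi(t)=\int_{\Tt^d}m(x,t)^q\,dx=\|m(\cdot,t)\|_{L^q(\Tt^d)}^q$. Here $U(z)=z^q$ satisfies \eqref{DispConvMFGintro} with $P(z)=U'(z)z-U(z)=(q-1)z^q$, so $P'(z)z-P(z)=(q-1)^2z^q$. From the proof of Theorem \ref{mainthm}, $\phi'(t)=\int_{\Tt^d}P(m)\div(D_pH)\,dx=(q-1)\int_{\Tt^d}m^q\div(D_pH)\,dx$, and the exact identity obtained there (the displayed line just before \eqref{estimateMain}) reads $\phi''(t)=\int_{\Tt^d}\big(P'(m)m-P(m)\big)\div(D_pH)^2+P(m)\tr\!\big((D(D_pH))^2\big)+P'(m)g'(m)\,Dm\,D_{pp}^2H\,Dm$. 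The last two integrands are nonnegative ($P\ge 0$ and a square trace; $g'\ge 0$ and $D_{pp}^2H\ge 0$ since $H$ is convex), so $\phi''(t)\ge(q-1)^2\int_{\Tt^d}m^q\div(D_pH)^2\,dx$.

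Now write $m^q\div(D_pH)=m^{q/2}\cdot\big(m^{q/2}\div(D_pH)\big)$ and apply Cauchy--Schwarz in $L^2(\Tt^d,dx)$: $\phi'(t)^2\le(q-1)^2\Big(\int_{\Tt^d}m^q\Big)\Big(\int_{\Tt^d}m^q\div(D_pH)^2\Big)\le\phi(t)\,\phi''(t)$. Since mass is conserved and positive, $\phi(t)\ge\big(\int_{\Tt^d}m\big)^q>0$, hence $(\ln\phi)''=\big(\phi''\phi-(\phi')^2\big)/\phi^2\ge 0$; thus $\ln\phi$, and therefore $\tfrac1q\ln\phi(t)=\ln\|m(\cdot,t)\|_{L^q(\Tt^d)}$, is convex on $[0,T]$. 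Evaluating the convexity inequality against the endpoints $t=0$ and $t=T$ and exponentiating yields \eqref{ineqNorms} for $1<q<\infty$, which together with the two endpoint cases completes the proof.

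The computation is essentially routine once one reuses Theorem \ref{mainthm}; the only genuine idea is spotting the factorization that turns the bound $\phi''\ge(q-1)^2\int m^q\div(D_pH)^2$ and the identity $\phi'=(q-1)\int m^q\div(D_pH)$ into the differential inequality $(\phi')^2\le\phi\,\phi''$ characterizing log-convexity. Minor points to check are the strict positivity of $\phi$ (from conservation of mass) so that $\ln\phi$ is well defined, and the normalization $|\Tt^d|=1$ used in the $q\to\infty$ passage.
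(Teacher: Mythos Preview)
Your argument is correct and matches the paper's approach almost exactly: reuse the computations in the proof of Theorem \ref{mainthm} to get $\phi'=(q-1)\int m^q\div(D_pH)$ and $\phi''\ge(q-1)^2\int m^q\div(D_pH)^2$, apply Cauchy--Schwarz to deduce $(\phi')^2\le\phi\,\phi''$ and hence log-convexity, and take $q\to\infty$ for the endpoint. One small caution on your phrase ``a square trace'': $\tr(M^2)\ge 0$ is not automatic for arbitrary real matrices $M$, but it does hold here because $D(D_pH)=D_{pp}^2H\cdot D^2u$ is the product of a positive semidefinite matrix and a symmetric matrix and hence has real eigenvalues (this is exactly the content of Lemma \ref{lemmaMatrix}).
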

\begin{proof} First of all, notice that if $f$ is smooth and positive, then $\ln f$ is convex if and only if
\begin{equation} \label{logConvexity}
(\ln f)'' = \left(\frac{f'}{f}\right)' = \frac{f'' f - (f')^2}{f^2} \geq 0;
\end{equation}
that is,
\begin{equation} \label{logConvexityForm}
\quad f'' f \geq (f')^2.
\end{equation}
First, we consider the case $1\leq q<\infty$. We begin by computing $P(z)=U'(z)z-U(z)=(qz^{q-1})z-z^q = (q-1)z^q$. Then, plug $U(z)=z^q$ into \eqref{estimateMain} to get
\begin{align*}
\frac{d^2}{dt^2} \int m(x,t)^q \geq &\int \left( q-1+\frac{1}{d}\right)(q-1)m^q\div(D_pH)^2 + q(q-1)m^{q-1} g'(m) Dm D_{pp}^2 H Dm \\
\geq & (q-1)^2 \int m^q \div(D_pH)^2.
\end{align*}
Thus,
\begin{align*}
\left( \frac{d}{dt} \int m^q \right)^2 &= \left( (q-1)\int m^q \div(D_pH) \right)^2 \\
&\leq (q-1)^2 \left(\int m^q \right)\left(\int m^q\div(D_pH)^2\right) \leq \left( \int m^q \right) \left( \frac{d^2}{dt^2} \int m^q \right).
\end{align*}
The preceding inequality combined with \eqref{logConvexityForm} shows that $\ln\left(\int m^q\right)$ is convex. Therefore,
\begin{align*}
\ln\left(\int m(x,t)^q\right) &\leq \left(1-\frac{t}{T}\right) \ln\left( \int m^0(x)^q \right) + \frac{t}{T} \ln\left( \int m^T(x)^q \right) \\
&= \ln\left( \left( \int m^0(x)^q \right)^{ 1-\frac{t}{T} } \left( \int m^T(x)^q \right)^{\frac{t}{T}} \right).
\end{align*}
Therefore, 
\begin{equation*}
\int m(x,t)^q \leq \left( \int m^0(x)^q \right)^{ 1-\frac{t}{T} } \left( \int m^T(x)^q \right)^{\frac{t}{T}}.
\end{equation*}
Exponentiating the previous inequality to $\frac{1}{q}$ to obtain the result.

Finally, we address the case $q=\infty$. Because $\Ll^d(\Tt^d)=1<\infty$, we can pass to the limit in \eqref{ineqNorms} as $q\to\infty$ to derive the estimate for the supremum.
\end{proof}

\begin{remark}
For $g(m)=0$ and $H(p) = \frac{|p|^2}{2} + \bar{H}$, $\bar{H}\in \Rr$, solutions of \eqref{MFGMain} are displacement interpolants between the initial density, $m^0$, and the terminal density, $m^T$. Therefore, Proposition \eqref{LogConvexity} gives both the log convexity of $L^q$ norms and $L^\infty$ bounds for the optimal transport problem, provided the initial and terminal densities are bounded. 
\end{remark}

For certain choices of $g$ and $H$, 
the preceding estimate can be improved even further if $1<q<\infty$, For example, here, we examine the case $g'(m)\geq C m^\alpha, C>0, \alpha\in\Rr$ and $H$ uniformly convex. When $\alpha<0$, we assume $m(x,t)>0$ everywhere.

\begin{lemma}
Let $m(x,t)$ be as in Theorem \eqref{mainthm} and suppose that
\begin{equation*}
\int_{\Tt^d} m(x,t)dx = 1
\end{equation*}
for all $t\in [0,T]$. Assume also that $\|m^0\|_{L^q}, \|m^T\|_{L^q}>1, g'(m)\geq C m^\alpha, C>0, \alpha\in\Rr$ and $H$ is uniformly convex. Then, for all $1<q<\infty$,
\begin{equation*}
\|m(\cdot,t)\|_{L^q(\Tt^d)} < \|m^0(\cdot)\|_{L^q(\Tt^d)}^{1-\frac{t}{T}}\|m^T(\cdot)\|_{L^q(\Tt^d)}^{\frac{t}{T}}, \quad \forall\ t\in (0,T).
\end{equation*}
\end{lemma}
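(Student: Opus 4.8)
The plan is to revisit the second-derivative estimate \eqref{estimateMain} and, unlike in the proof of Proposition~\ref{LogConvexity}, retain the coupling term instead of discarding it. With $U(z)=z^q$ one has $P(z)=(q-1)z^q$, and, exactly as in that proof, \eqref{estimateMain} becomes
\[
\frac{d^2}{dt^2}\int_{\Tt^d} m^q\,dx
\;\geq\; (q-1)^2\int_{\Tt^d} m^q\div(D_pH)^2\,dx
\;+\; q(q-1)\int_{\Tt^d} m^{q-1}g'(m)\,Dm\, D_{pp}^2H\, Dm\,dx .
\]
Since $H$ is uniformly convex, $D_{pp}^2H\geq\theta\,\mathrm{Id}$ for some $\theta>0$, and since $g'(m)\geq Cm^\alpha$ with $C>0$, the last term dominates $q(q-1)C\theta\int_{\Tt^d} m^{q-1+\alpha}|Dm|^2\,dx\geq 0$; note that $q-1+\alpha<0$ can occur only when $\alpha<0$, a case in which $m>0$ on the compact set $\Tt^d\times[0,T]$, so this integral is well defined.

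Next I would rerun verbatim the Cauchy--Schwarz step from the proof of Proposition~\ref{LogConvexity}: writing $f(t)=\int_{\Tt^d}m(x,t)^q\,dx>0$, one has
\[
\bigl(f'(t)\bigr)^2=(q-1)^2\Bigl(\int_{\Tt^d} m^q\div(D_pH)\Bigr)^2
\leq (q-1)^2\Bigl(\int_{\Tt^d} m^q\Bigr)\Bigl(\int_{\Tt^d} m^q\div(D_pH)^2\Bigr).
\]
Multiplying the lower bound for $f''$ by $f>0$ and subtracting $(f')^2$ yields
\[
f f''-(f')^2\;\geq\; q(q-1)C\theta\,\Bigl(\int_{\Tt^d} m^q\Bigr)\Bigl(\int_{\Tt^d} m^{q-1+\alpha}|Dm|^2\Bigr)\;\geq\;0 ,
\]
so by \eqref{logConvexityForm} the map $t\mapsto\ln f(t)$ is convex on $[0,T]$; exponentiating the resulting chord inequality to the power $1/q$ recovers \eqref{ineqNorms} in its non-strict form.

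To promote this to the strict inequality I argue by contradiction. If $\ln f$ agreed with its chord joining $(0,\ln f(0))$ and $(T,\ln f(T))$ at some interior point $t_0\in(0,T)$, then, being convex on $[0,T]$ and touching that chord at an interior point, $\ln f$ would have to be affine on all of $[0,T]$; hence $(\ln f)''\equiv0$, i.e. $ff''-(f')^2\equiv0$, which by the last display forces $\int_{\Tt^d} m^{q-1+\alpha}|Dm(\cdot,t)|^2\,dx=0$ for every $t\in[0,T]$. Since $m>0$ wherever the exponent $q-1+\alpha$ is negative, and since at any zero of $m$ the function $m(\cdot,t)\geq0$ attains its spatial minimum so that $Dm$ vanishes there as well, this gives $Dm(\cdot,t)\equiv0$; thus $m(\cdot,t)$ is constant in space for each $t$, and the normalization $\int_{\Tt^d}m(\cdot,t)\,dx=1$ together with $\Ll^d(\Tt^d)=1$ forces $m\equiv1$, whence $\|m^0\|_{L^q}=1$, contradicting $\|m^0\|_{L^q}>1$. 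Therefore $\ln f(t)<\bigl(1-\tfrac{t}{T}\bigr)\ln f(0)+\tfrac{t}{T}\ln f(T)$ for all $t\in(0,T)$, and exponentiating to the power $1/q$ gives the stated strict bound.

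The main obstacle is the strict-convexity dichotomy and its consequence: one must justify carefully that a convex function meeting its chord at a single interior point is affine throughout $[0,T]$, and then correctly extract $Dm\equiv0$ from the vanishing of $\int_{\Tt^d} m^{q-1+\alpha}|Dm|^2$, keeping track of the sign of $q-1+\alpha$ and of the points where $m$ may vanish. Everything else is a bookkeeping re-run of the computation underlying Proposition~\ref{LogConvexity}, now keeping the coupling term that was previously thrown away.
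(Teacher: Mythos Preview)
Your proposal is correct and follows essentially the same route as the paper: both keep the coupling term in \eqref{estimateMain}, use uniform convexity of $H$ and $g'(m)\geq Cm^\alpha$ to bound $(\ln f)''$ from below by a nonnegative multiple of $\int m^{q-1+\alpha}|Dm|^2$, and then argue strictness from the fact that this integral vanishes only when $m(\cdot,t)\equiv 1$. The only cosmetic difference is in the strictness step: the paper observes directly that $\|m(\cdot,\bar t)\|_{L^q}>1$ forces $(\ln f)''(\bar t)>0$, hence strict convexity near $t=0$ and $t=T$, whereas you run the equivalent contrapositive (touching the chord forces affinity, hence $Dm\equiv 0$ for all $t$, hence $m\equiv 1$). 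Your handling of the case $q-1+\alpha<0$ and of possible zeros of $m$ is in fact more carefully spelled out than in the paper.
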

\begin{proof}
We select $f(t)=\int m(x,t)^q$, to which corresponds $P(z)=(q-1)z^q$,  and use \eqref{estimateMain} and \eqref{logConvexity} to get the inequality
\begin{align*}
\frac{d^2}{dt^2} \ln\left( \int m^q \right) & \geq \frac{\left(\int m^q\right)\left( \int P'(m)g'(m) Dm D_{pp}^2 H Dm\right)}{\left(\int m^q\right)^2} = C \frac{\int m^{q-1+\alpha} |Dm|^2}{\int m^q} \\
&= \begin{cases}
C \frac{\int |D(m^{\frac{q+1+\alpha}{2}})|^2}{\int m^q}, \quad & \alpha\neq -q-1 \\
C \frac{\int |D\ln(m)|^2}{\int m^q}, \quad & \alpha = -q-1 \\
\end{cases} 
\end{align*}
Because $m$ integrates to $1$, Jensen's inequality implies $ \|m(\cdot,t)\|_{L^q(\Tt^d)}=1$ if and only if $m(x,t)=1$ for all $x\in \Tt^d$. Therefore, $\|m(\cdot,\bar{t})\|_{L^p}>1$ if and only if $\|m(\cdot,t)\|_{L^q(\Tt^d)}$ is strictly convex in a neighborhood of $\bar{t}$. Because $\|m^0 \|_{L^p}>0$, then $t\mapsto \|m(\cdot,t)\|_{L^p}$ is strictly convex in a neighborhood of $0$. Analogously, $t\mapsto \|m(\cdot,t)\|_{L^p}$ is strictly convex in a neighborhood of $t=T$. Therefore, the inequality in \eqref{ineqNorms} is strict for all $t\in (0,T)$.
\end{proof}

\subsection{Convexity in dimension 1} \label{Remark1dimension}
Finally, we address the one-dimensional case, $d=1$. A direct computation shows that the convexity of $U$ implies the convexity of $t\mapsto \int_0^1 U(m(x,t))dx$. Accordingly, convexity holds for functions of the form $U(z) = (z+\varepsilon)^{-q}$, $q\geq 0, \varepsilon>0$; that is,
\begin{equation*}
\int_0^1 \frac{1}{(m(x,t)+\varepsilon)^q}dx \leq \left(1-\frac{t}{T}\right) \int_0^1 \frac{1}{(m^0(x)+\varepsilon)^q}dx + \frac{t}{T} \int_0^1 \frac{1}{(m^T(x)+\varepsilon)^q}dx.
\end{equation*}
Now, raising both sides to the power $\frac{1}{q}$ and bounding the r.h.s, we get
\begin{align*}
\| (m(\cdot,t)+\varepsilon)^{-1} \|_{L^q} &\leq \left( \left(1-\frac{t}{T}\right) \int_0^1 \frac{1}{(m^0(x)+\varepsilon)^q}dx + \frac{t}{T} \int_0^1 \frac{1}{(m^T(x)+\varepsilon)^q}dx \right)^{\frac{1}{p}} \\
& \leq \max\left\{ \int_0^1 \frac{1}{(m^0(x)+\varepsilon)^q}dx, \int_0^1 \frac{1}{(m^T(x)+\varepsilon)^q}dx \right\}^{\frac{1}{q}} \\
&= \max\{ \|(m^0(\cdot)+\varepsilon)^{-1} \|_{L^q}, \|(m^T(\cdot)+\varepsilon)^{-1} \|_{L^q} \}.
\end{align*}
By letting $\varepsilon\to 0$ and then $q\to\infty$, we get
\begin{equation*}
\| m(\cdot,t)^{-1} \|_{L^\infty} \leq \max\{ \|m^0(\cdot)^{-1} \|_{L^\infty}, \|m^T(\cdot)^{-1} \|_{L^\infty} \}.
\end{equation*}
Finally, we invert the above inequality to get quasi-concavity for the infimum
\begin{equation*}
\inf m(\cdot,t) \geq \min\{ \inf m^0(\cdot), \inf m^T(\cdot)\}.
\end{equation*}

\section{Extension to First-Order MFG with congestion}
\label{SExtension}

In MFG with congestion, the Hamiltonian-Jacobi equation depends on the inverse of the density, $m(x,t)$. Here, we study MFGs with Hamiltonians $H(p) = \frac{|p|^\beta}{\beta}$ and with a congestion exponent $\alpha>0$.
\begin{theorem} \label{THMcongestion}
Let $m,u\in C^\infty(\Tt^d\times [0,T])$, $m>0$, be periodic solutions of the first-order MFG with congestion
\begin{equation} \label{MFGwithCongestion}
\begin{cases}
-u_t + m^{\alpha(1-\beta)} \frac{|Du|^\beta}{\beta} = g(m) \\
m_t - \div(m^{1+\alpha(1-\beta)} Du |Du|^{\beta-2}) = 0 & (x,t) \in \Tt^d\times (0,T) \\
\end{cases}
\end{equation}
with $g:\Rr^+\to \Rr$ smooth and non-decreasing. If
\begin{equation}
\beta\geq 2,\ \ q+2\alpha(1-\beta)\geq 0 \quad  \text{and }\ 1 - \frac{1 - \frac{1}{d}}{q+ 2\alpha(1-\beta)} - \frac{\alpha(\beta-1)}{2} \geq 0         \label{convexityCongestion} 
\end{equation}
or
\begin{equation}
1 < \beta<2,\ \ q+ 2\alpha(1-\beta) \geq 0 \quad  \text{and} \ 1 - \frac{1 - \frac{1}{d}}{q+ 2\alpha(1-\beta)} - \frac{\alpha}{2} \geq 0,           \label{convexityCongestion1}
\end{equation}
then
\begin{equation} \label{convexityCong}
t\mapsto \int_{\Tt^d} m(x,t)^q dx \quad \text{ is convex}.
\end{equation}
\end{theorem}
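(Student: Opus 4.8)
The plan is to mimic the proof of Theorem \ref{mainthm}, carrying the congestion factor $m^{\alpha(1-\beta)}$ through the computation. Write $w = m^{\alpha(1-\beta)}$, so that the drift in the continuity equation is $b := m^{1+\alpha(1-\beta)} Du|Du|^{\beta-2} = m\, w\, |Du|^{\beta-2}Du$ and the Hamilton--Jacobi equation reads $-u_t + w\,\tfrac{|Du|^\beta}{\beta} = g(m)$. With $U(z)=z^q$ and $P(z)=(q-1)z^q$ as before, I first compute $\frac{d}{dt}\int m^q$. Using $m_t = \div b$ and integrating the $Dm$-term by parts exactly as in Theorem \ref{mainthm}, one obtains
\begin{equation*}
\frac{d}{dt}\int m^q = \int P(m)\,\div\!\bigl(w\,|Du|^{\beta-2}Du\bigr),
\end{equation*}
i.e.\ the role of $\div(D_pH)$ is now played by $\div(w\,|Du|^{\beta-2}Du)$. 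I would then differentiate a second time. The term $\frac{d}{dt}P(m)$ produces $P'(m)\div b\cdot\div(w|Du|^{\beta-2}Du)$, and the term $\frac{d}{dt}\div(w|Du|^{\beta-2}Du)$ must be expanded using $u_t = w\tfrac{|Du|^\beta}{\beta} - g(m)$ and $w_t = \alpha(1-\beta)m^{\alpha(1-\beta)-1}m_t = \alpha(1-\beta)m^{-1}w\,\div b$.

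The key structural point, as in Theorem \ref{mainthm}, is that after integration by parts the genuinely cross-differentiated "bad" terms cancel in pairs (the analogues of $Q$ in \eqref{equationQ}), leaving a sum of: (i) a positive-multiple-of-$\div(b/m)^2$ term with coefficient $P'(m)m - P(m) + \tfrac1d P(m)$ coming from the trace inequality of Lemma \eqref{lemmaMatrix} applied to $D\bigl(w|Du|^{\beta-2}Du\bigr)=\bigl(\text{p.s.d.}\bigr)\cdot D^2(\cdots)$; (ii) a $g'(m)$-term of the form $P'(m)g'(m)\,Dm^\top\!\bigl(\text{p.s.d.}\bigr)Dm \ge 0$; and (iii) extra terms, \emph{new relative to the uncongested case}, involving $Dm$ and $Dw = \alpha(1-\beta)m^{-1}w\,Dm$ arising because $w$ is not constant and $H_p(p)=|p|^{\beta-2}p$ is only $(\beta-1)$-homogeneous rather than linear. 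Collecting these, the $|Dm|^2$-type terms combine into a quadratic form in $\bigl(\div(b/m),\, |Du|^{-1}Dm\cdot Du\, \text{-type quantity}\bigr)$ (or more precisely a quadratic form involving $Dm$ and the Hessian of $u$), whose nonnegativity is exactly what conditions \eqref{convexityCongestion}–\eqref{convexityCongestion1} encode. The split $\beta\ge2$ versus $1<\beta<2$ enters because the Hessian $D^2_{pp}H(p)=|p|^{\beta-2}\bigl(I + (\beta-2)\tfrac{p\otimes p}{|p|^2}\bigr)$ has smallest eigenvalue $(\beta-1)|p|^{\beta-2}$ when $\beta<2$ but $|p|^{\beta-2}$ when $\beta\ge2$; bounding the relevant quadratic form from below therefore uses the factor $\alpha(\beta-1)/2$ in one regime and $\alpha/2$ in the other, matching the last inequality in each hypothesis. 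The substitution $q + 2\alpha(1-\beta)$ for $q$ in the McCann-type denominator $1 - \frac{1-1/d}{\,\cdot\,}$ reflects that the effective exponent seen by the density in the combined $m^q$-and-$w$ bookkeeping is shifted by $2\alpha(1-\beta)$; this is why the hypothesis requires $q + 2\alpha(1-\beta)\ge 0$.

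Concretely, the steps in order: (1) derive the first-derivative identity above; (2) differentiate again, substituting $u_t$ and $w_t$; (3) integrate by parts all terms containing $D(\div(\cdots))$ and verify the pairwise cancellations, using the homogeneity identity $\div(D_{pp}^2H\, D(\text{something}))$ analogous to \eqref{divergenceTrace} but with the extra $Dw$-contributions tracked; (4) apply Lemma \eqref{lemmaMatrix} to extract the $\tfrac1d$-term; (5) assemble the remaining terms into an explicit quadratic form $\mathcal{Q}(Dm, D^2u)$ and show $\mathcal{Q}\ge 0$ under \eqref{convexityCongestion} or \eqref{convexityCongestion1}; (6) conclude $\frac{d^2}{dt^2}\int m^q\ge 0$. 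The main obstacle is step (5): unlike the uncongested case, where convexity of $H$ and monotonicity of $g$ sufficed termwise, here the $Dw$-terms can have either sign, so one must genuinely complete the square across several terms and read off the precise algebraic conditions — getting the bookkeeping of the homogeneity exponents right (so that the constants land exactly on \eqref{convexityCongestion}–\eqref{convexityCongestion1}) is the delicate part. A secondary technical point is that $m>0$ is used throughout to make $w=m^{\alpha(1-\beta)}$ and its logarithmic derivative $Dw/w = \alpha(1-\beta)Dm/m$ smooth, and to divide by $m$ when forming $\div(b/m)$.
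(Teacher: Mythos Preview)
Your plan is essentially the paper's own argument, and the overall architecture (two time derivatives, integration by parts, cancellation of the mixed $D(\div(\cdots))$ terms, matrix lemma, then a sign check on the residual quadratic) is correct. Two execution points need adjustment.

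First, the matrix lemma cannot be applied to $D\bigl(w|Du|^{\beta-2}Du\bigr)$ as you wrote, because
\[
D\bigl(w|Du|^{\beta-2}Du\bigr)=w\,D^2_{pp}H\,D^2u+\bigl(|Du|^{\beta-2}Du\bigr)\otimes Dw,
\]
and the rank-one correction destroys the (p.s.d.)$\times$(symmetric) structure that Lemma~\ref{lemmaMatrix} needs. The paper avoids this by first rewriting the first derivative as
\[
\frac{d}{dt}\int m^q=\frac{q(q-1)}{q+\alpha(1-\beta)}\int m^{q+\alpha(1-\beta)}\div\bigl(Du|Du|^{\beta-2}\bigr),
\]
i.e.\ pulling all powers of $m$ outside the divergence (equivalent to your formula after one more integration by parts). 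Then the trace inequality is applied to $D(Du|Du|^{\beta-2})=R\,D^2u$ with $R=D^2_{pp}H$ genuinely p.s.d., and the $Dw$-contributions you anticipate in (iii) appear instead as an explicit cross term $G=\int \frac{\alpha(\beta-1)}{q+2\alpha(1-\beta)}m^{q+2\alpha(1-\beta)}Du|Du|^{\beta-2}\cdot D\bigl(\div(Du|Du|^{\beta-2})\bigr)$.

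Second, your diagnosis of the $\beta\ge 2$ versus $1<\beta<2$ split is not quite where the case distinction actually lives. The Hessian eigenvalue bound is used uniformly (it gives the lower bounds on the $D$ and $E$ terms for all $\beta>1$). The split comes from how Young's inequality is applied to the cross term $G$ after one integration by parts: writing the integrand schematically as $-\alpha(\beta-1)\,a\cdot b$, the paper uses $-\alpha(\beta-1)ab\ge -\tfrac{\alpha(\beta-1)}{2}(a^2+b^2)$ when $\beta\ge 2$, but $-\alpha\cdot(\beta-1)a\cdot b\ge -\tfrac{\alpha(\beta-1)^2}{2}a^2-\tfrac{\alpha}{2}b^2$ when $1<\beta<2$. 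The choice is dictated by which weighting makes the resulting $|Dm\cdot Du|^2$ coefficient combine with the $E$-term into something of sign $\operatorname{sgn}(\beta-2)$ times itself, i.e.\ nonnegative. This is exactly why $\tfrac{\alpha(\beta-1)}{2}$ appears in \eqref{convexityCongestion} and $\tfrac{\alpha}{2}$ in \eqref{convexityCongestion1}. Your ``complete the square'' formulation would reach the same place, but the paper's approach is termwise rather than via a single quadratic form.
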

\begin{proof}
First, we compute
\begin{align*}
\frac{d}{dt} \int m^q &= q \int m^{q-1} m_t = q \int m^{q-1} \div(m^{1+\alpha(1-\beta)} Du |Du|^{\beta-2}) \\
&= q\int m^{q-1} \Big((1+\alpha(1-\beta)) m^{\alpha(1-\beta)} Dm Du |Du|^{\beta-2} + m^{1+\alpha(1-\beta)} \div(Du |Du|^{\beta-2}) \Big) \\
&= q \int (1+\alpha(1-\beta)) m^{q-1+\alpha(1-\beta)} Dm Du |Du|^{\beta-2} + m^{q+\alpha(1-\beta)} \div(Du |Du|^{\beta-2}) \\
&= q \int \frac{(1+\alpha(1-\beta))}{q+\alpha(1-\beta)} D(m^{q+\alpha(1-\beta)}) Du |Du|^{\beta-2} + m^{q+\alpha(1-\beta)} \div(Du |Du|^{\beta-2}) \\
&= q \int \left( 1 -  \frac{1+\alpha(1-\beta)}{q+\alpha(1-\beta)} \right) m^{q+\alpha(1-\beta)} \div(Du |Du|^{\beta-2}) \\
&= \int \frac{q(q-1)}{q+\alpha(1-\beta)}  m^{q+\alpha(1-\beta)} \div(Du |Du|^{\beta-2}).
\end{align*}
Thus, we have
\begin{equation*}
\frac{1}{q(q-1)} \frac{d^2}{dt^2} \int m^q = \int \overbrace{m^{q+\alpha(1-\beta)-1} m_t \div(Du|Du|^{\beta-2})}^A + \overbrace{\frac{1}{q+\alpha(1-\beta)} m^{q+\alpha(1-\beta)} \div((Du |Du|^{\beta-2})_t)}^B.
\end{equation*}
Now, we expand and integrate by parts $A$
\begin{align*}
A=& \int m^{q+\alpha(1-\beta)-1} m_t \div(Du|Du|^{\beta-2}) \\
=& \int m^{q+\alpha(1-\beta)-1} \div(m^{1+\alpha(1-\beta)} Du |Du|^{\beta-2}) \div(Du|Du|^{\beta-2}) \\
=& \int m^{q+\alpha(1-\beta)-1} \div(Du|Du|^{\beta-2}) \Big((1+\alpha(1-\beta)) m^{\alpha(1-\beta)} Dm Du |Du|^{\beta-2} + m^{1+\alpha(1-\beta)} \div(Du |Du|^{\beta-2}) \Big) \\
=& \int (1+\alpha(1-\beta)) m^{q+ 2\alpha(1-\beta)-1} Dm Du |Du|^{\beta-2} \div(Du|Du|^{\beta-2}) + m^{q+2\alpha(1-\beta)} \div(Du |Du|^{\beta-2})^2 \\
=& \int \frac{(1+\alpha(1-\beta))}{q+ 2\alpha(1-\beta)} D(m^{q+ 2\alpha(1-\beta)}) Du |Du|^{\beta-2} \div(Du|Du|^{\beta-2}) + m^{q+2\alpha(1-\beta)} \div(Du |Du|^{\beta-2})^2 \\
=& \int -\frac{(1+\alpha(1-\beta))}{q+ 2\alpha(1-\beta)} m^{q+ 2\alpha(1-\beta)} \div\Big(Du |Du|^{\beta-2} \div(Du|Du|^{\beta-2})\Big) + m^{q+2\alpha(1-\beta)} \div(Du |Du|^{\beta-2})^2 \\
=& \int \left( 1-\frac{(1+\alpha(1-\beta))}{q+ 2\alpha(1-\beta)} \right) m^{q+ 2\alpha(1-\beta)} \div(Du |Du|^{\beta-2})^2 \\
&\hspace{3cm} -\frac{(1+\alpha(1-\beta))}{q+ 2\alpha(1-\beta)} m^{q+ 2\alpha(1-\beta)} D(\div(Du|Du|^{\beta-2})) Du |Du|^{\beta-2}. \\
\end{align*}
Before simplifying $B$, we compute $(Du |Du|^{\beta-2})_t$:
\begin{align*}
(Du |Du|^{\beta-2})_t &= |Du|^{\beta-2} Du_t + Du (\beta-2) |Du|^{\beta-4} Du Du_t \\
&= (\Ii |Du|^{\beta-2} +(\beta-2) |Du|^{\beta-4} Du\otimes Du) Du_t = R Du_t,
\end{align*}
where $\Ii$ is the identity matrix and  $R = (\Ii |Du|^{\beta-2} +(\beta-2) |Du|^{\beta-4} Du\otimes Du) = D_{pp}H$. Using the preceding identity, we expand $B$ as follows
\begin{align*}
B =& \frac{1}{q+\alpha(1-\beta)} \int m^{q+\alpha(1-\beta)} \div((Du |Du|^{\beta-2})_t) = - \int m^{q+\alpha(1-\beta)-1} Dm  (Du|Du|^{\beta-2})_t \\
=& - \int m^{q+\alpha(1-\beta)-1} Dm R Du_t = - \int m^{q+\alpha(1-\beta)-1} Dm R \left( D\left(m^{\alpha(1-\beta)} \frac{|Du|^\beta}{\beta} \right) - g'(m)Dm \right) \\
=& \int \overbrace{-m^{q+\alpha(1-\beta)-1} Dm R D\left(m^{\alpha(1-\beta)} \frac{|Du|^\beta}{\beta} \right)}^C + \overbrace{m^{q+\alpha(1-\beta)-1} g'(m) Dm R Dm}^D.
\end{align*}
Because $\beta\geq 1$, we get $D\geq 0$ as follows
\begin{align*}
D =& \int m^{q+\alpha(1-\beta)-1} g'(m) Dm (\Ii |Du|^{\beta-2} +(\beta-2) |Du|^{\beta-4} Du\otimes Du) Dm \\
=& \int m^{q+\alpha(1-\beta)-1} g'(m) (|Dm|^2 |Du|^{\beta-2} +(\beta-2) |Du|^{\beta-4} |DuDm|^2) \\
\geq& \int m^{q+\alpha(1-\beta)-1} g'(m) (|DmDu|^2 |Du|^{\beta-4} +(\beta-2) |Du|^{\beta-4} |DuDm|^2) \\
=& \int (\beta-1) m^{q+\alpha(1-\beta)-1} g'(m) |DmDu|^2 |Du|^{\beta-4},
\end{align*}
where we used $|DmDu|\leq |Dm||Du|$. Concerning $C$, we  expand further the expression
\begin{align*}
C=& -\int m^{q+\alpha(1-\beta)-1} Dm R D\left(m^{\alpha(1-\beta)} \frac{|Du|^\beta}{\beta} \right) \\
=& -\int m^{q+\alpha(1-\beta)-1} Dm R \left( \alpha(1-\beta) m^{\alpha(1-\beta)-1} \frac{|Du|^\beta}{\beta} Dm + m^{\alpha(1-\beta)} |Du|^{\beta-2} D^2u Du \right) \\
=& \int \overbrace{\alpha(\beta-1) m^{q+2\alpha(1-\beta)-2} Dm R Dm \frac{|Du|^\beta}{\beta}}^E - \overbrace{m^{q+2\alpha(1-\beta)-1} Dm R D^2u Du |Du|^{\beta-2}}^F.
\end{align*}
The first term above simplifies to
\begin{align*}
E =& \int \alpha(\beta-1) m^{q+2\alpha(1-\beta)-2} Dm (\Ii |Du|^{\beta-2} +(\beta-2) |Du|^{\beta-4} Du\otimes Du) Dm \frac{|Du|^\beta}{\beta} \\
=& \int \frac{\alpha(\beta-1)}{\beta} m^{q+2\alpha(1-\beta)-2} |Dm|^2 |Du|^{2\beta-2} +\frac{\alpha(\beta-1)(\beta-2)}{\beta} m^{q+2\alpha(1-\beta)-2} |DmDu|^2 |Du|^{2\beta-4} \\
\geq& \int \frac{\alpha(\beta-1)^2}{\beta} m^{q+2\alpha(1-\beta)-2} |DmDu|^2 |Du|^{2\beta-4}.
\end{align*}
Finally, $F$ becomes
\begin{align*}
F =& \int - m^{q+2\alpha(1-\beta)-1} Dm \Big(\Ii |Du|^{\beta-2} +(\beta-2) |Du|^{\beta-4} Du\otimes Du \Big) D^2u Du |Du|^{\beta-2} \\
=& \int -\frac{1}{q+2\alpha(1-\beta)} D(m^{q+2\alpha(1-\beta)}) \Big( D^2u Du |Du|^{2\beta-4} + (\beta-2) Du\otimes Du D^2u Du |Du|^{2\beta-6} \Big) \\
=& \int \frac{1}{ q+2\alpha(1-\beta)} m^{q+2\alpha(1-\beta)} \div(D^2u Du |Du|^{2\beta-4} + (\beta-2) Du\otimes Du D^2u Du |Du|^{2\beta-6}) \\
=& \int \frac{1}{ q+2\alpha(1-\beta)} m^{q+2\alpha(1-\beta)} \div\Big( \Big(D^2u |Du|^{\beta-2} + (\beta-2) |Du|^{\beta-4} Du \otimes (D^2u Du) \Big)Du|Du|^{\beta-2} \Big) \\
=& \int \frac{1}{ q+2\alpha(1-\beta)} m^{q+2\alpha(1-\beta)} \div\Big( D(Du|Du|^{\beta-2}) Du|Du|^{\beta-2} \Big) \\
=& \int \frac{1}{ q+2\alpha(1-\beta)} m^{q+2\alpha(1-\beta)} \tr( D(Du|Du|^{\beta-2})^2) \\
&\hspace{3cm} + \frac{1}{ q+2\alpha(1-\beta)} m^{q+2\alpha(1-\beta)} D(\div(Du|Du|^{\beta-2}) Du |Du|^{\beta-2}), \\
\end{align*}
where we used \eqref{divergenceTrace}. We add all terms and simplify, to conclude
\begin{align}
\frac{d^2}{dt^2} \int m^q \geq & \int \left( 1-\frac{(1+\alpha(1-\beta))}{q+ 2\alpha(1-\beta)} \right) m^{q+ 2\alpha(1-\beta)} \div(Du |Du|^{\beta-2})^2 \notag\\
&-\frac{(1+\alpha(1-\beta))}{q+ 2\alpha(1-\beta)} m^{q+ 2\alpha(1-\beta)} D(\div(Du|Du|^{\beta-2})) Du |Du|^{\beta-2} \notag\\
& + \frac{1}{ q+2\alpha(1-\beta)} m^{q+2\alpha(1-\beta)} \tr( D(Du|Du|^{\beta-2})^2) \notag\\
& + \frac{1}{ q+2\alpha(1-\beta)} m^{q+2\alpha(1-\beta)} D(\div(Du|Du|^{\beta-2}) Du |Du|^{\beta-2}) \notag\\
& + \frac{\alpha(\beta-1)^2}{\beta} m^{q+2\alpha(1-\beta)-2} |DmDu|^2 |Du|^{2\beta-4} \notag\\&+ (\beta-1) m^{q+\alpha(1-\beta)-1} g'(m) |DmDu|^2 |Du|^{\beta-4} \notag\\
\geq & \int \left( 1-\frac{(1+\alpha(1-\beta) - \frac{1}{d})}{q+ 2\alpha(1-\beta)} \right) m^{q+ 2\alpha(1-\beta)} \div(Du |Du|^{\beta-2})^2  \label{expressionG}\\
& + \overbrace{\frac{\alpha(\beta-1)}{p+ 2\alpha(1-\beta)} m^{q+ 2\alpha(1-\beta)} D(\div(Du|Du|^{\beta-2})) Du |Du|^{\beta-2}}^G \notag\\
& + \frac{\alpha(\beta-1)^2}{\beta} m^{q+2\alpha(1-\beta)-2} |DmDu|^2 |Du|^{2\beta-4}\notag \\&+ (\beta-1) m^{q+\alpha(1-\beta)-1} g'(m) |DmDu|^2 |Du|^{\beta-4}\notag,
\end{align}
where we used \eqref{lemmaMatrix} to estimate
\begin{equation*}
\tr( D(Du|Du|^{\beta-2})^2)  \geq \frac{1}{d} \div(Du|Du|^{\beta-2})^2.
\end{equation*}
We only need to bound $G$ from below. So, we integrate it by parts and use Cauchy-Schwartz inequality to conclude that
\begin{align}
G =& \int \frac{\alpha(\beta-1)}{q+ 2\alpha(1-\beta)} m^{q+ 2\alpha(1-\beta)} Du |Du|^{\beta-2} D(\div(Du|Du|^{\beta-2}))\notag \\
=& \int - \frac{\alpha(\beta-1)}{q+ 2\alpha(1-\beta)} \div( m^{q+ 2\alpha(1-\beta)} Du |Du|^{\beta-2} ) \div(Du|Du|^{\beta-2})\notag \\
=& \int - \frac{\alpha(\beta-1)}{q+ 2\alpha(1-\beta)} m^{q+ 2\alpha(1-\beta)} \div(Du|Du|^{\beta-2})^2   \label{continuationBeta} \\
& - \alpha(\beta-1) m^{q+ 2\alpha(1-\beta)} (m^{-1} Dm Du |Du|^{\beta-2} ) \div(Du|Du|^{\beta-2})\notag \\
\geq & \int -\frac{\alpha(\beta-1)}{q+ 2\alpha(1-\beta)} m^{q+ 2\alpha(1-\beta)} \div(Du|Du|^{\beta-2})^2\notag\\& - \frac{\alpha(\beta-1)}{2} m^{q+ 2\alpha(1-\beta)-2} |Dm Du|^2 |Du|^{2\beta-4} \notag\\
& -\frac{\alpha(\beta-1)}{2} m^{q+ 2\alpha(1-\beta)} \div(Du|Du|^{\beta-2})^2\notag \\
=& \int \left( \frac{\alpha(1-\beta)}{q+ 2\alpha(1-\beta)} - \frac{\alpha(\beta-1)}{2} \right) m^{q+ 2\alpha(1-\beta)} \div(Du|Du|^{\beta-2})^2 \notag\\\notag
& - \frac{\alpha(\beta-1)}{2} m^{q+ 2\alpha(1-\beta)-2} |Dm Du|^2 |Du|^{2\beta-4}.
\end{align}
We use this last term into \eqref{expressionG} to get
\begin{align*}
& \int \left( 1-\frac{(1+\alpha(1-\beta) - \frac{1}{d})}{q+ 2\alpha(1-\beta)} \right) m^{q+ 2\alpha(1-\beta)} \div(Du |Du|^{\beta-2})^2 \\
&+ \left( \frac{\alpha(1-\beta)}{q+ 2\alpha(1-\beta)} - \frac{\alpha(\beta-1)}{2} \right) m^{q+ 2\alpha(1-\beta)} \div(Du|Du|^{\beta-2})^2 \\
& - \frac{\alpha(\beta-1)}{2} m^{q+ 2\alpha(1-\beta)-2} |Dm Du|^2 |Du|^{2\beta-4} + \frac{\alpha(\beta-1)^2}{\beta} m^{q+2\alpha(1-\beta)-2} |DmDu|^2 |Du|^{2\beta-4} \\
& + (\beta-1) m^{q+\alpha(1-\beta)-1} g'(m) |DmDu|^2 |Du|^{\beta-4} \\
=& \int \left( 1 - \frac{1 - \frac{1}{d}}{q+ 2\alpha(1-\beta)} - \frac{\alpha(\beta-1)}{2} \right) m^{q+ 2\alpha(1-\beta)} \div(Du|Du|^{\beta-2})^2 \\
& + \frac{\alpha(\beta-1)(\beta-2)}{2\beta} m^{q+ 2\alpha(1-\beta)-2} |Dm Du|^2 |Du|^{2\beta-4} + (\beta-1) m^{q+\alpha(1-\beta)-1} g'(m) |DmDu|^2 |Du|^{\beta-4} .
\end{align*}
From the above inequality, we see that if \eqref{convexityCong} holds, then $\frac{d^2}{dt^2} \int m^q\geq 0$. This concludes the proof for the case $\beta\geq 2$.

For $1 < \beta<2$, we revisit the expression for $G$ in \eqref{continuationBeta}. Then, we modify the Cauchy-Schwartz inequality to include the term $(\beta-1)$; that is,
\begin{align*}
& \int - \frac{\alpha(\beta-1)}{q+ 2\alpha(1-\beta)} m^{q+ 2\alpha(1-\beta)} \div(Du|Du|^{\beta-2})^2 \\
& - \alpha m^{q+ 2\alpha(1-\beta)} ((\beta-1) m^{-1} Dm Du |Du|^{\beta-2} ) \div(Du|Du|^{\beta-2}) \\
\geq & \int -\frac{\alpha(\beta-1)}{q+ 2\alpha(1-\beta)} m^{q+ 2\alpha(1-\beta)} \div(Du|Du|^{\beta-2})^2 \\
&- \frac{\alpha(\beta-1)^2}{2} m^{q+ 2\alpha(1-\beta)-2} |Dm Du|^2 |Du|^{2\beta-4} -\frac{\alpha}{2} m^{q+ 2\alpha(1-\beta)} \div(Du|Du|^{\beta-2})^2. \\
=& \int \left( \frac{\alpha(1-\beta)}{q+ 2\alpha(1-\beta)} - \frac{\alpha}{2} \right) m^{q+ 2\alpha(1-\beta)} \div(Du|Du|^{\beta-2})^2 \\
& - \frac{\alpha(\beta-1)^2}{2} m^{q+ 2\alpha(1-\beta)-2} |Dm Du|^2 |Du|^{2\beta-4}.
\end{align*}
Thus, we have
\begin{align*}
\frac{1}{q(q-1)} \int m^q\geq & \int \left( 1 - \frac{1 - \frac{1}{d}}{q+ 2\alpha(1-\beta)} - \frac{\alpha}{2} \right) m^{q+ 2\alpha(1-\beta)} \div(Du|Du|^{\beta-2})^2 \\
& + \frac{\alpha(\beta-1)^2(2-\beta)}{2\beta} m^{q+ 2\alpha(1-\beta)-2} |Dm Du|^2 |Du|^{2\beta-4} \\
&+ (\beta-1) m^{q+\alpha(1-\beta)-1} g'(m) |DmDu|^2 |Du|^{\beta-4},
\end{align*}
which is non-negative if \eqref{convexityCongestion1} holds.
\end{proof}

The idea used in the case without congestion to prove log-convexity of $L^q$ norms fails in this case. However, we can still prove quasi-convexity of the $L^\infty$ norm. In the next corollary, we identify couples $(\alpha,\beta)\in \Rr^+ \times (1,\infty)$ such that the set of values $q\geq 1$ satisfying either \eqref{convexityCongestion} or \eqref{convexityCongestion1} is unbounded. Subsequently, we obtain a uniform bound on the density of solutions of \eqref{MFGwithCongestion}.

\begin{cor}\label{remarkCongestion}
Let $(u,m)$ be classical solutions of \eqref{MFGwithCongestion}. If
\begin{align*}
\beta\geq 2 \quad & \text{and} \quad \alpha < \frac{2}{\beta-1} \\
\text{or} \\
1 < \beta < 2\quad &\text{and} \quad \alpha<2,
\end{align*}
then, for every $d\geq 1$,
\begin{equation} \label{upperBoundCong}
\|m(\cdot,t)\|_{L^\infty(\Tt^d)} \leq \max\{ \|m(\cdot)^0\|_{L^\infty(\Tt^d)}, \|m(\cdot)^T\|_{L^\infty(\Tt^d)} \}.
\end{equation}
\end{cor}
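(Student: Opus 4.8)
The plan is to feed Theorem~\ref{THMcongestion} into the same limiting argument used at the end of the proof of Proposition~\ref{LogConvexity}. The one new ingredient is to check that, under the stated hypotheses on $(\alpha,\beta)$, the set
\[
S \;=\; \bigl\{\, q\geq 1 \;:\; q \text{ satisfies } \eqref{convexityCongestion} \text{ or } \eqref{convexityCongestion1}\,\bigr\}
\]
contains a half-line $[q_0,\infty)$.

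First I would analyze the convexity conditions as $q\to\infty$. Since $\alpha>0$ and $\beta>1$ we have $2\alpha(1-\beta)<0$, so the constraint $q+2\alpha(1-\beta)\geq 0$ holds for all $q\geq 2\alpha(\beta-1)$, and moreover $q+2\alpha(1-\beta)\to+\infty$, whence $\frac{1-1/d}{q+2\alpha(1-\beta)}\to 0^{+}$. Consequently, when $\beta\geq 2$ the left-hand side of the last inequality in \eqref{convexityCongestion} converges to $1-\tfrac{\alpha(\beta-1)}{2}$, which is strictly positive exactly when $\alpha<\tfrac{2}{\beta-1}$; and when $1<\beta<2$ the left-hand side of the last inequality in \eqref{convexityCongestion1} converges to $1-\tfrac{\alpha}{2}>0$ exactly when $\alpha<2$. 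In either case there is $q_0$ so that every $q\geq q_0$ lies in $S$, uniformly in the dimension, since $1-1/d\in[0,1)$ is harmless in the limit.

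Next, for each $q\geq q_0$ Theorem~\ref{THMcongestion} gives convexity of $t\mapsto\int_{\Tt^d}m(x,t)^q\,dx$ on $[0,T]$, so that this function lies below its chord and hence below the larger of its two endpoint values:
\[
\int_{\Tt^d} m(x,t)^q\,dx \;\leq\; \Bigl(1-\tfrac{t}{T}\Bigr)\!\int_{\Tt^d}(m^0)^q\,dx + \tfrac{t}{T}\!\int_{\Tt^d}(m^T)^q\,dx \;\leq\; \max\Bigl\{\!\int_{\Tt^d}(m^0)^q\,dx,\ \int_{\Tt^d}(m^T)^q\,dx\Bigr\}.
\]
Taking $q$-th roots, the map $x\mapsto x^{1/q}$ being increasing, yields $\|m(\cdot,t)\|_{L^q(\Tt^d)}\leq\max\{\|m^0\|_{L^q(\Tt^d)},\|m^T\|_{L^q(\Tt^d)}\}$ for all $q\geq q_0$. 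Finally, because $\Ll^d(\Tt^d)=1<\infty$ and $m(\cdot,t), m^0, m^T$ are bounded and continuous on $\Tt^d$, one has $\|f\|_{L^q(\Tt^d)}\to\|f\|_{L^\infty(\Tt^d)}$ as $q\to\infty$; letting $q\to\infty$ along $[q_0,\infty)\subset S$ produces \eqref{upperBoundCong}.

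The main obstacle is really just the first step: the strictness of the hypotheses $\alpha<\tfrac{2}{\beta-1}$ and $\alpha<2$ is exactly what makes the limiting quantities $1-\tfrac{\alpha(\beta-1)}{2}$ and $1-\tfrac{\alpha}{2}$ strictly \emph{positive} rather than merely nonnegative, and hence what guarantees that $S$ is unbounded above; with equality the argument would only pin down finitely many exponents and the passage to the $L^\infty$ norm would break down. The remaining steps are routine and mirror the corresponding passage in the proof of Proposition~\ref{LogConvexity}.
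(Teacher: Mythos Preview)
Your proposal is correct and follows essentially the same approach as the paper: you verify that the convexity conditions \eqref{convexityCongestion}--\eqref{convexityCongestion1} hold for all sufficiently large $q$ (the paper does this via the parametrization $\alpha=\tfrac{2}{\beta-1}(1-\varepsilon)$, resp.\ $\alpha=2(1-\varepsilon)$, which is equivalent to your limit argument), then invoke Theorem~\ref{THMcongestion} and pass to the limit $q\to\infty$ using the chord inequality, exactly as in Section~\ref{Remark1dimension}. Your final remark about strictness of the hypotheses is also on point and anticipates the paper's subsequent remark that in $d=1$ equality can be allowed.
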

\begin{proof}
First, we examine the case $\beta\geq 2$. Because $\alpha<\frac{2}{\beta-1}$, there is $\varepsilon>0$ such that $\alpha = \frac{2}{\beta-1}(1-\varepsilon)$. We use this value to simplify the following expression,
\begin{equation*}
1 - \frac{1 - \frac{1}{d}}{q+ 2\alpha(1-\beta)} - \frac{\alpha(\beta-1)}{2} = \varepsilon - \frac{1 - \frac{1}{d}}{q+ 2\alpha(1-\beta)}.
\end{equation*}
As $q\to \infty$, the r.h.s of the above identity converges to $\varepsilon$ for all $d\geq 1$. Thus, there exists $Q>0$ such that, for all $q>Q$,
\begin{equation*}
1 - \frac{1 - \frac{1}{d}}{q+ 2\alpha(1-\beta)} - \frac{\alpha(\beta-1)}{2} >0.
\end{equation*}
Moreover, upon taking $Q$ large enough, we can assume that $q+2\alpha(1-\beta) >0$ for all $q>Q$. By Theorem \eqref{THMcongestion}, $t\mapsto \int m(x,t)^qdx$ is convex for all $q>Q$. Following similar computations to \eqref{Remark1dimension}, we get
\begin{equation*}
\|m(\cdot,t)\|_{L^\infty(\Tt^d)} \leq \max\{ \|m(\cdot)^0\|_{L^\infty(\Tt^d)}, \|m(\cdot)^T\|_{L^\infty(\Tt^d)} \}.
\end{equation*}

Analogously, in the case $1 < \beta < 2$, we use \eqref{convexityCongestion1}, set $\alpha = 2(1-\varepsilon)$, and follow the same reasoning as for $\beta \geq 2$ to obtain \eqref{upperBoundCong}.
\end{proof}

\begin{remark}
In dimension $d=1$, because
\begin{equation*}
\frac{1 - \frac{1}{d}}{q+ 2\alpha(1-\beta)} = 0
\end{equation*}
for all $q> 2\alpha(\beta-1)$, we do not need to use the $\varepsilon$ argument. Therefore, \eqref{upperBoundCong} holds even in the case $\beta\geq 2$ and $\alpha = \frac{2}{\beta-1}$ or $1\leq \beta<2$ and $\alpha=2$.
\end{remark}

\appendix
\section{Appendix}

Here, we prove the lemma used in the proofs of theorems \ref{mainthm} and \ref{THMcongestion}.

\begin{lemma} \label{lemmaMatrix}
Let $A,B\in \Rr^{d\times d}$ be symmetric matrices with $A$ positive semidefinite, then
\begin{equation*}
\tr\Big((AB)^2\Big) \geq \frac{1}{d} \tr(AB)^2.
\end{equation*}
\end{lemma}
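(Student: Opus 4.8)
The plan is to reduce everything to a statement about the eigenvalues of a single symmetric matrix. The key observation is that although $AB$ need not be symmetric, it is similar to a symmetric matrix when $A$ is positive semidefinite: writing $A = A^{1/2}A^{1/2}$ with $A^{1/2}$ the symmetric positive semidefinite square root, we have $AB = A^{1/2}(A^{1/2}BA^{1/2})A^{-1/2}$ on the range of $A$, so $AB$ is similar to the symmetric matrix $C := A^{1/2}BA^{1/2}$. (If $A$ is only semidefinite and singular, one either restricts to $\mathrm{range}(A)$ and handles the kernel separately, or — more cleanly — first proves the inequality for $A$ strictly positive definite and then passes to the limit $A + \epsi \Ii \to A$ as $\epsi \to 0^+$, using continuity of both sides in the entries of $A$.) Consequently $\tr((AB)^2) = \tr(C^2)$ and $\tr(AB) = \tr(C)$, and $C$ is a genuine symmetric matrix with real eigenvalues $\lambda_1,\dots,\lambda_d$.

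With this reduction, the claim becomes the elementary inequality
\begin{equation*}
\sum_{i=1}^d \lambda_i^2 \geq \frac{1}{d}\left(\sum_{i=1}^d \lambda_i\right)^2,
\end{equation*}
since $\tr(C^2) = \sum_i \lambda_i^2$ and $\tr(C)^2 = (\sum_i \lambda_i)^2$. This is immediate from the Cauchy–Schwarz inequality applied to the vectors $(\lambda_1,\dots,\lambda_d)$ and $(1,\dots,1)$ in $\Rr^d$: $(\sum_i \lambda_i \cdot 1)^2 \leq (\sum_i \lambda_i^2)(\sum_i 1^2) = d\sum_i \lambda_i^2$. Equivalently, it is the statement that the variance of the multiset $\{\lambda_i\}$ is non-negative.

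So the steps, in order, are: (1) reduce to $A$ positive definite by a perturbation/continuity argument; (2) use the symmetric square root $A^{1/2}$ to exhibit $AB$ as similar to $C = A^{1/2}BA^{1/2}$, hence conclude $\tr((AB)^2) = \tr(C^2)$ and $\tr(AB) = \tr(C)$; (3) diagonalize the symmetric matrix $C$ and apply Cauchy–Schwarz to its eigenvalues. I do not expect any serious obstacle here; the only point requiring a little care is the singular case in step (1), but the limiting argument disposes of it cleanly since both $\tr((AB)^2)$ and $\tr(AB)^2$ are polynomials in the entries of $A$ and $B$ and the inequality is closed.
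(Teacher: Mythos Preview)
Your proposal is correct and follows essentially the same route as the paper: reduce to $A$ positive definite by the perturbation $A+\epsi\Ii$, use the symmetric square root to show $AB$ is similar to the symmetric matrix $A^{1/2}BA^{1/2}$, then diagonalize and apply the elementary inequality on eigenvalues. The only cosmetic difference is that the paper writes the final step as $\sum_{i,j}\lambda_i\lambda_j \leq \sum_{i,j}\tfrac{\lambda_i^2+\lambda_j^2}{2}$ rather than invoking Cauchy--Schwarz, which is of course the same thing.
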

\begin{proof}
Notice that, if $A$ is symmetric positive semidefinite, then $A_\varepsilon = A + \varepsilon \Ii$ is symmetric positive definite and converges to $A$ as $\varepsilon\to 0$. By approximation, it is then enough to prove the lemma for $A$ positive definite.

Since $A$ is symmetric positive definite, it admits an invertible square root $A^{\frac{1}{2}}$. Multiplying $AB$ by $A^{-\frac{1}{2}}$ on the left and by $A^{\frac{1}{2}}$ on the right, we conclude that
\begin{equation*}
A^{-\frac{1}{2}} (AB) A^{\frac{1}{2}} = A^{\frac{1}{2}} B A^{\frac{1}{2}};
\end{equation*}
that is, $AB$ is similar to a symmetric matrix and, thus,  it is diagonalizable. Accordingly, we have $AB = S^{-1} \Lambda S$, where $\Lambda = diag(\lambda_1,\lambda_2,\ldots,\lambda_d)$. Then,
\begin{equation*}
\tr(AB)^2 = ( \sum_i \lambda_i)^2 = \sum_{i,j} \lambda_i \lambda_j \leq \sum_{i,j} \frac{\lambda_i^2}{2} + \frac{\lambda_j^2}{2} = d \sum_i \lambda_i^2 = \tr\Big( (AB)^2 \Big).
\end{equation*}
\end{proof}

\bibliographystyle{plain}

\def\cprime{$'$}

\end{document}